\theoremstyle{plain}
\newtheorem{theorem}{Theorem}[section]
\newtheorem{prop}[theorem]{Proposition}
\newtheorem{cor}[theorem]{Corollary} 
\newtheorem{lemma}[theorem]{Lemma}
\theoremstyle{definition}
\newtheorem{defn}[theorem]{Definition}
\newtheorem{rmk}[theorem]{Remark} 
\newtheorem{rmrk}[theorem]{Remark} 
\newtheorem{eg}[theorem]{Example}
\newtheorem{ex}[theorem]{Example}
\newtheorem{problem}[theorem]{Problem}
\newtheorem*{ex*}{Example}
\newtheorem*{conj*}{Conjecture}
\newtheorem*{thmNoNum}{Theorem}
\newtheorem*{claim*}{Claim}
\DeclareMathOperator{\link}{link}
\DeclareMathOperator{\init}{init}
\DeclareMathOperator{\supp}{supp}
\renewcommand{\subset}{\subseteq}
\newcommand{\ZZ}{\ensuremath{\mathbb{Z}}}
\newcommand{\RR}{\ensuremath{\mathbb{R}}}
\newcommand{\mtx}{\begin{bmatrix}} 
\newcommand{\mtxend}{\end{bmatrix}}
\newcommand{\ideal}[1]{\left(#1\right)}
\newcommand{\isomorphism}{\cong}
\renewcommand{\vec}[1]{\mathbf{#1}}
\newcommand{\abs}[1]{\lvert#1\rvert}
\newcommand{\restrict}[2]{{#1}\vert_{#2}}
\newcommand{\defeq}{\mathrel{\mathop{:}\hspace{-0.75ex}=}}
\newcommand{\eqdef}{\mathrel{=\hspace{-0.75ex}\mathop{:}}}
\begin{document}

\title[Betti numbers and Markov degrees]{Betti numbers of Stanley-Reisner rings determine hierarchical Markov degrees}

\author{Sonja Petrovi\'c}
\address{Sonja Petrovi\'c is with the Department of Statistics, The Pennsylvania State University, University Park PA 16802
		\\\emph{Current address:} Statistics Department, University of Chicago.}
\email{\url{petrovic@psu.edu}}

\author{Erik Stokes}
\address{Erik Stokes lives in Odenton MD}
\email{\url{stokes.erik@gmail.com}}

\date{\today}

\begin{abstract} 
There are two seemingly unrelated ideals associated with a simplicial
complex $\Delta$: one is the Stanley-Reisner ideal $I_\Delta$, the
monomial ideal generated by minimal non-faces of $\Delta$, well-known
in combinatorial commutative algebra; the other is the toric ideal
$I_{M(\Delta)}$ of the facet subring of $\Delta$, whose generators
give a Markov basis for the hierarchical model defined by $\Delta$,
playing a prominent role in algebraic statistics.

In this note we show that the complexity of the generators of
$I_{M(\Delta)}$ is determined by the Betti numbers of $I_\Delta$.  The
unexpected connection between the syzygies of the Stanley-Reisner
ideal and degrees of minimal generators of the toric ideal provide a
framework for further exploration of the connection between the model
and its many relatives in algebra and combinatorics.
 \end{abstract}

\maketitle

\section{Introduction}
\label{section:intro}

A central problem in algebraic statistics is the study of the combinatorial properties and complexity of Markov bases for toric models.  
In statistics, Markov bases  provide an alternative, non-asymptotic approach to performing goodness-of-fit tests and model selection. Their use has increased in recent years,  especially for the models where the standard tools do not scale well.  
 A statistical model is called {algebraic} if its  parameter space is a semi-algebraic set.  In this case, the model corresponds to the real positive part of the algebraic variety obtained by taking the Zariski closure of the image of the model parametrization map. 
When the variety is toric,  any generating set of its defining  ideal is a {Markov basis} for the model;  this is a fundamental theorem  that appeared in the breakthrough paper \cite{DiacSturm}.
From a large and growing literature in algebraic statistics, we single out three recent books that can serve as an overview: \cite{algStatBook}, \cite{ASCBbook} and \cite{italianAlgStatBook}.

Simplicial complexes define an important class of toric models called hierarchical models (defined in \cite{fienberg} and see also
 \cite{HoSu}). In a hierarchical model, relationships  between $m$ discrete random variables are described by a simplicial complex  $\Delta$ on $m$ vertices: 
 the facets of $\Delta$ determine which margins of the corresponding $m$-way contingency table, or tensor, serve as minimal sufficient statistics for the model. The matrix $M(\Delta)$ of the sufficient statistics facet-margin linear map defines a toric ideal $I_{M(\Delta)}$ (see Section~\ref{sec:background} for a precise definition).

A general open problem is to  better understand how combinatorial (and other) properties of the simplicial complex determine  the properties of the toric ideal  $I_{M(\Delta)}$ of the hierarchical model. 
A crucial question is to determine the combinatorial complexity and degree estimates for the generators of the toric ideal. This problem has been studied for several families of models, but remains open in general. 
In algebraic statistics, the largest degree of a minimal generator of $I_{M(\Delta)}$ is called the \emph{Markov width} of the model, as it provides the bound on the complexity of the moves needed for the Markov Chain used to walk on the space of observations.  
   There are several interesting results in this direction, giving upper and lower bounds and solving this problem for special cases (for example \cite{slimTables} and  \cite{HoSu}, and  for related work, see \cite{aokiTakemura}, \cite{haraTakemuraYoshida}). 
    In addition, if $\Delta$ is a reducible complex, then $I_{M(\Delta)}$ is  a toric fiber product, so it is possible to lift the generators  inductively (for the most general construction that applies to reducible complexes, see  \cite{seth:ToricFiberProducts} and \cite{sethAlexThomas}). 
   
The Stanley-Reisner ideal is a well-studied ideal associated to a
simplicial complex $\Delta$ on $m$ vertices; it is the squarefree
monomial ideal $I_\Delta \subset K[x_1,\dots,x_m]$ generated by the
minimal non-faces of $\Delta$.  Betti numbers of Stanley-Reisner
ideals have been studied by many authors from different points of
view.  Our study uses some known
  results and combinatorial techniques to provide a first link between Betti
numbers of $I_\Delta$ and the degrees of minimal generators of
$I_{M(\Delta)}$.

Our main result shows that the syzygies of the Stanley-Reisner ideal predict degrees of minimal generators of the toric ideal of the hierarchical model:
\begin{thmNoNum} [Theorem~\ref{thm:main}]
	Let $\Delta$ be any simplicial complex.  Suppose that the
        Betti diagram of the minimal free resolution of $R/I_{\Delta}$
        has a non-zero entry in the $j$-th row (that is
        $\beta_{i,i+j}(R/I_\Delta)\neq 0$ for some $i$).  Then the
        toric ideal of the hierarchical model, $I_M$, has a minimal
        generator with degree $2^j$.
\end{thmNoNum}
The proof of Theorem~\ref{thm:main} is in
Section~\ref{section:theorems}, which also contains a geometric
interpretation of the result.  In
Section~\ref{section:moves-explicitly} we outline a way to construct
some of the predicted Markov moves from Theorem~\ref{thm:main}.  We
also carry out a computational summary of unpredictable moves in
Section~\ref{sec:unpredictable}.

One important family of complexes that arises in the theory of
graphical models is \emph{decomposable complexes}.  For a detailed
study, see for example \cite{Dobra2003} and \cite{MR1789526}, where
they appear under the name \emph{decomposable graphical models}.  They
correspond to the hierarchical models of clique complexes of chordal
graphs (cf. Theorem 3.3.3.\ in \cite{algStatBook}).  In
Corollary~\ref{thm:decomposable}, we recover a result of Fr\"oberg on
linear resolutions for decomposable complexes.

The Betti diagram of $I_\Delta$ can vary with the ground field for general $\Delta$. In particular, the regularity may change. Recall that the regularity is defined to be the largest $j$ so that $\beta_{i,i+j}(R/I_\Delta)\neq 0$, that is, the number of rows in the Betti diagram. For example, the Alexander dual of any triangulation of the
projective plane changes regularity when the field has characteristic $2$. 
Our results hold over \emph{any} field. 
 Therefore, we can choose a ``worst-case'' field, where the regularity is largest, thus giving us most information about the Markov complexity of the hierarchical model.

The insight provided by the Betti numbers of the Stanley-Reisner ideal motivates interpretation of other classical numerical invariants of the coordinate ring.  In terms of the model, these questions are still unexplored (Section~\ref{sec:unpredictable} contains a few open problems).  We hope this relationship will inspire a further study of the effect of the algebraic and geometric invariants of the classical combinatorial object on the underlying algebraic statistical model. 

Before providing technical details, we illustrate the main Theorem on an example. 
\begin{ex}\label{ex:4-cycle}
	Consider the binary model of a $4$-cycle $\Delta$ with facets
        $ \{12\}, \{23\}, \{34\}, \{14\} $.  The Stanley-Reisner ideal
        $(x_1x_3, x_2x_4)$ has the following Betti diagram:
	\begin{center}
	\begin{tabular}{rrrrc}
	 &       0 & 1 & 2&\\
	total: & 1 & 2 & 1&\\
	    0: & 1 & . & .&\\
	    1: & . & 2 & .&\\
	    2: & . & . & 1&.
	\end{tabular}
	\end{center}
Since rows $1$ and $3$ have nonzero entries,	Theorem~\ref{thm:main} states that the toric ideal $I_M$        must have generators in degrees $2^1=2$ and $2^2=4$.  

The toric ideal $I_M$ lives in the polynomial ring with $16$ variables $p_{i_1i_2i_3i_4}$, $i_j\in\{0,1\}$, which refer to the binary states of four random variables corresponding to the vertices of $\Delta$. 
In this case, $I_M$ is generated solely in         degrees $2$ and $4$:
	\begin{align*}
		I_M =  (&p_{1011} p_{1110} - p_{1010} p_{1111},\phantom{xxx}
                 p_{1001} p_{1100} - p_{1000} p_{1101},\\
		& p_{0111} p_{1101} - p_{0101} p_{1111}, \phantom{xxx}
		p_{0110} p_{1100} - p_{0100} p_{1110}, \phantom{xxx}
		p_{0011} p_{0110} -p_{0010} p_{0111},\\
                & p_{0011} p_{1001} - p_{0001}p_{1011},\phantom{xxx}
                 p_{0001} p_{0100} - p_{0000} p_{0101},\phantom{xxx}
                 p_{0010} p_{1000} - p_{0000} p_{1010},\\
                p_{0100}&p_{0111} p_{1001} p_{1010} - p_{0101} p_{0110} p_{1000} p_{1011},\phantom{xxx}
                p_{0010} p_{0101} p_{1011}p_{1100} - p_{0011} p_{0100} p_{1010} p_{1101},\\
                p_{0001} &p_{0110} p_{1010} p_{1101} - p_{0010}p_{0101} p_{1001} p_{1110},\phantom{xxx}
                p_{0001} p_{0111} p_{1010} p_{1100} -p_{0011} p_{0101} p_{1000} p_{1110},\\
                p_{0000} &p_{0011} p_{1101} p_{1110} - p_{0001} p_{0010}p_{1100} p_{1111},\phantom{xxx}
                p_{0000}p_{0111} p_{1001} p_{1110} - p_{0001} p_{0110} p_{1000} p_{1111},\\
                p_{0000} &p_{0110} p_{1011} p_{1101} - p_{0010} p_{0100} p_{1001}p_{1111},\phantom{xxx}
                 p_{0000} p_{0111}p_{1011} p_{1100} - p_{0011} p_{0100} p_{1000} p_{1111}).
        \end{align*}
\end{ex}

\section{Toric ideals of hierarchical models}
\label{sec:background}

\label{section:models}

Hierarchical models generalize the notion of row and column sums of a matrix.  For higher dimensional tensors, the models are defined in terms of the facets of some simplicial complex.
\begin{defn}\label{def:hier-model-map}
	\par\noindent
	\begin{enumerate}[(a)]
	\item Given a $d_1\times\dotsm\times d_n$ table
          $T\in\bigotimes_{j=1}^n\RR^{d_j}$ and $F\subset [n]$,
          define the \emph{$F$-margin}
          of $T$ to be
	\[T_F \defeq \sum_{(i_j\mid j\not\in F)} T_{i_1\dotsc
	  i_n}\in\bigotimes_{j\in F} \RR^{d_j}.\]

	\item Let $\Delta$ be a simplicial complex on $[n]$, with
	  facets $F_1,\dotsc,F_s$ and $d=(d_1,\dotsc,d_n)$. Define a
	  linear map $\phi_\Delta=\phi_{\Delta,d}\colon           \bigotimes_{i=1}^n
	  \RR^{d_i}\to \bigoplus_{i=1}^s(\bigotimes_{j\in             F_i}\RR^{d_j})$
	  sending a table $T$ to $(T_{F_1},\dotsc,T_{F_s})$.  
	  $M(\Delta,d)$ is the matrix representing this map in the           standard basis.
	\end{enumerate}
\end{defn}

\begin{ex}\label{ex:segre}
	The matrix for the complex with facets $\{1\}$ and $\{2\}$ is 
	\[\left[\begin{array}{rrrr}
	1 & 1 & 0 & 0 \\
	0 & 0 & 1 & 1 \\
	1 & 0 & 1 & 0 \\
	0 & 1 & 0 & 1
	\end{array}\right].\]
	The map $\phi:\RR^4\to\RR^2\oplus \RR^2$, restricted to the
        probability simplex, sends a $2\times 2$ table $[T_{ij}]$ to
        the 1-dimensional margins obtained by summing over each index:
        $[T_{ij}]\mapsto ([\sum_j T_{ij}]_{i=1,2},[\sum_i
          T_{ij}]_{j=1,2})$.  These are the row and column sums of the
        table.
\end{ex}

Hierarchical models are a subclass of log-linear models in statistics: sets  of all probability distributions whose logarithms are in the linear span of the rows of some matrix $A$.
The hierarchical model $\mathcal M_{M(\Delta)}$ is the intersection of the toric variety parametrized by the map induced by $M(\Delta)$ and the probability simplex. 
In the language of algebraic statistics, the facets of the complex determine the sufficient statistics of the model. 

In practice, one is interested in performing a random walk on the fibers  of the model, defined as the sets of points with the same sufficient statistics.  Such random walks are crucial for testing the goodness of fit for the model. 
 The \emph{fiber} of a point $u$ in the image of $M(\Delta)$, denoted by $\mathcal F(u)= \phi_\Delta^{-1}(u)$, is a set of all points $v$ such that $M(\Delta) v =  M(\Delta) u$.
A \emph{Markov basis} $\mathcal B\subset \ker M(\Delta)$ for the model $\mathcal M_{M(\Delta)}$ is a finite set of tensors, called  \emph{moves}, 
that connects every fiber $\mathcal F(u)$ in the following sense: if $T_1,T_2\in\mathcal{F}(u)$ then $T_1=T_2+\sum_{i=1}^k m_i$ and $T_2+\sum_{i=0}^\ell m_i \in\mathcal{F}(u)$ for every $\ell\leq k$, for some collection of moves $\{m_1,\dots,m_k\}\subset\mathcal B$.

The matrix $M(\Delta)$ of a hierarchical model  determines the toric ideal 
$
	I_{M(\Delta)}\defeq(x^u-x^v : u-v\in\ker M(\Delta))
$
in the polynomial ring $S\defeq K[x_1,\dots,x_N]$, where $N$ is the
number of entries in the table.  A starting point of the field of algebraic statistics is the realization that generators of the ideal  form a Markov basis for the model (\cite{DiacSturm}, see also Theorem 1.3.6.\ in \cite{algStatBook}). 
\begin{ex}
	Extending the map for the $2\times 2$ table from Example
        \ref{ex:segre} to complex numbers as follows:
	\begin{align*}
		\tilde\phi: \mathbb{C}[T_{11},T_{12},T_{21},T_{22}] &\to \mathbb{C}[r_1,r_2,c_1,c_2]\\
			T_{ij} &\mapsto r_i c_j,
	\end{align*}
	we see that the image of $\tilde\phi$ restricted to the
        probability simplex is exactly the image of $\phi$; 
	namely, $T_{ij}$ represents the $(i,j)$-entry in the table. 
	The toric ideal
        $\ker\tilde\phi$ is generated by $T_{11}T_{22}-T_{12}T_{21}$,
        the determinant of the generic $2\times 2$ matrix. Therefore, this hierarchical model is the real positive part of the Segre embedding  $\mathbb        P^1\times \mathbb P^1 \to \mathbb P^3$.
\end{ex}

In algebraic statistics, we refer to the degrees of minimal generators of the toric ideal $I_{M(\Delta)}$ as \emph{Markov degrees}. 
Surprisingly, we make a link between these Markov degrees and Betti numbers of the monomial ideal $I_\Delta$. 

\section{Syzygies of Stanley-Reisner ideals}
\label{section:SR}
Here we briefly recall the necessary background and notation. 
Let $K$ be any field, $R\defeq K[x_1,\dotsc,x_n]$ and define $x_\sigma \defeq\prod_{i\in\sigma}x_i$ for $\sigma\subset [n]=\{1,\dotsc,n\}$. Then the \emph{Stanley-Reisner ideal} of the simplicial complex $\Delta$ is
\[
	I_\Delta\defeq \ideal{x_\sigma\mid \sigma\not\in\Delta}\subset R.
\]

There are three basic constructions for a simplicial complex: restrictions, deletions, and links.  
For $\sigma\subset [n]$, the \emph{restriction} of $\Delta$ to $\sigma$ is defined as
$
	\restrict{\Delta}{\sigma}:=\{F\in\Delta\mid F\subset \sigma\}.
$
The \emph{deletion} of $v$ from $\Delta$ is
$	
	\Delta_{-v}\defeq \{F\in\Delta\mid v\not\in F\}. 
$
The \emph{link} of a complex with respect to $F\subset [n]$ is defined to be
$
	\link_\Delta(F)\defeq\{G\in\Delta\mid G\cap F=\emptyset,\; F\cup G\in\Delta\}.
$

 The \emph{graded Betti numbers} $\beta_{ij}$ of $I_\Delta\subset R$ encode the ranks of the syzygy modules in a minimal free resolution of its coordinate ring $R/I_\Delta$:
 \[
	0\to \bigoplus_{j\in\ZZ} R(-j)^{\beta_{pj}}\to \dotsm \to \bigoplus_{j\in\ZZ} R(-j)^{\beta_{2j}}\to \bigoplus_{j\in\ZZ} R(-j)^{\beta_{1j}}\to R\to R/I\to 0.
\]
Recall that $\beta_i
(R/I_\Delta)\defeq \sum_j \beta_{ij}(R/I_\Delta)$ are the \emph{total
  Betti numbers}.  If $I_\Delta$ and $R$ are understood from the
context, we simply write $\beta_{ij}$ for the graded Betti numbers.
We can also grade $R$  by
$\ZZ^n$ instead of $\ZZ$ by setting $\deg x_i$ to the $i$-th standard
unit vector in $\ZZ^n$.  Then the summands of the $i^{th}$ syzygy module  have the form $R(-\vec{b})^{\beta_{i\vec{b}}}$
for some $\vec{b}\in\ZZ^n$.  We call $\vec{b}$ a \emph{multidegree}
and $\beta_{i\vec b}$ a \emph{multigraded} Betti number.
Typically, we summarize this numerical
data in a standard \texttt{Macaulay2} \cite{M2} Betti diagram, a table
whose $(i,j)$-th entry is $\beta_{i,i+j}$.
	\begin{center}
	\begin{tabular}{rrrr}
	 &       $\dotsm$ & $i$ \\
	total: & $\dotsm$ & $\beta_i$ \\
            $\vdots$ &&$\vdots$ \\
	    $j$: & $\dotsm$ & $\beta_{i,i+j}$ \\
	\end{tabular}
	\end{center}
This is the notation used in Example~\ref{ex:4-cycle}, where, in the
interest of readability, we use $\cdot$ in place of 0. 

The fundamental result describing the Betti numbers of Stanley-Reisner ideals is Hochster's formula, which relates the Betti numbers to the simplicial cohomology of the complex.  
\begin{theorem}[Hochster's Formula, \cite{miller05:_combin_commut_algeb}, Corollary~5.12]
	The graded Betti numbers of $R/I_\Delta$ are given by
	\[
		\beta_{i,i+j}(R/I_\Delta)= \sum_{\abs{\sigma}=j}	\dim_K\widetilde{H}^{j-1}(\restrict{\Delta}{\sigma};K).
	\]
\end{theorem}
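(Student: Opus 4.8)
The plan is to compute the multigraded Betti numbers $\beta_{i,\mathbf{a}}(R/I_\Delta)=\dim_K\operatorname{Tor}_i^R(R/I_\Delta,K)_{\mathbf{a}}$, $\mathbf{a}\in\ZZ^n$, by resolving the residue field $K=R/\ideal{x_1,\dots,x_n}$ rather than $R/I_\Delta$, and then to recognize each multigraded strand of the resulting complex as a reindexed reduced simplicial cochain complex of a restriction of $\Delta$. Since $I_\Delta$ is a monomial ideal, the minimal free resolution of $R/I_\Delta$ is $\ZZ^n$-graded, so $\beta_{ij}(R/I_\Delta)=\sum_{|\mathbf{a}|=j}\beta_{i,\mathbf{a}}(R/I_\Delta)$; and because $I_\Delta$ is squarefree, $\beta_{i,\mathbf{a}}=0$ unless $\mathbf{a}$ is the indicator vector of some $\sigma\subset[n]$ (for a non-squarefree $\mathbf{a}$ the strand described below carries an explicit contracting homotopy). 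Computing $\operatorname{Tor}$ from the other factor, $\operatorname{Tor}_i^R(R/I_\Delta,K)=H_i\bigl(K_\bullet\otimes_R R/I_\Delta\bigr)$, where $K_\bullet$ is the Koszul complex on $x_1,\dots,x_n$ with $K_i=\bigwedge^i R^n$ having the $R$-basis $\{e_\tau:\tau\subset[n],\ |\tau|=i\}$ and with $e_\tau$ placed in multidegree $\tau$.

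Fix a squarefree multidegree $\sigma$ and set $C_\bullet\defeq(K_\bullet\otimes_R R/I_\Delta)_\sigma$. A $K$-basis of $C_i$ is given by the elements $x_{\sigma\setminus\tau}\,e_\tau$ with $\tau\subset\sigma$, $|\tau|=i$, and $x_{\sigma\setminus\tau}\neq 0$ in $R/I_\Delta$, that is, with $\sigma\setminus\tau\in\Delta|_\sigma$; the Koszul differential sends $x_{\sigma\setminus\tau}\,e_\tau$ to $\sum_{v\in\tau}\pm\,x_{(\sigma\setminus\tau)\cup\{v\}}\,e_{\tau\setminus v}$, a summand being interpreted as $0$ precisely when $(\sigma\setminus\tau)\cup\{v\}\notin\Delta$. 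Under the bijection $\tau\leftrightarrow F\defeq\sigma\setminus\tau$, the basis of $C_i$ is exactly the set of faces $F\in\Delta|_\sigma$ with $\dim F=|\sigma|-i-1$, and the differential $C_i\to C_{i-1}$ becomes $F\mapsto\sum_{v\in\sigma\setminus F}\pm\,(F\cup\{v\})$ with the terms not lying in $\Delta|_\sigma$ discarded --- in other words, up to signs it is the reduced simplicial coboundary $\widetilde{C}^{\,|\sigma|-i-1}(\Delta|_\sigma;K)\to\widetilde{C}^{\,|\sigma|-i}(\Delta|_\sigma;K)$. Hence $C_\bullet$ is a reindexing of the reduced cochain complex of $\Delta|_\sigma$ and $\operatorname{Tor}_i^R(R/I_\Delta,K)_\sigma\cong\widetilde{H}^{\,|\sigma|-i-1}(\Delta|_\sigma;K)$. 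Summing dimensions over all $\sigma$ with $|\sigma|=i+j$ then yields $\beta_{i,i+j}(R/I_\Delta)=\sum_{|\sigma|=i+j}\dim_K\widetilde{H}^{\,j-1}(\Delta|_\sigma;K)$, the asserted formula.

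The only genuinely delicate point is matching the signs, i.e.\ turning the ``up to signs'' above into an honest isomorphism of chain complexes: fixing a total order on $\sigma$, one must check that the Koszul sign attached to deleting $v$ from $\tau$ agrees --- after a suitable rescaling of the basis vectors by $\pm 1$ --- with the incidence coefficient $[F:F\cup\{v\}]$ in the simplicial coboundary. Since the position of $v$ within the ordered set $\tau$ and its position within $F\cup\{v\}$ are complementary inside $\sigma$, this reduces to a short sign computation. The remaining points --- that each $\Delta|_\sigma$ is again a simplicial complex, the vanishing of the non-squarefree strands, and the reduced-versus-unreduced bookkeeping in homological degree $i=|\sigma|$ (where $F=\emptyset$) --- are routine.
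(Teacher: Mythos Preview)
The paper does not actually prove this theorem: its entire proof is a citation to Corollary~5.12 of Miller--Sturmfels, \emph{Combinatorial Commutative Algebra}. Your argument---computing $\operatorname{Tor}_i^R(R/I_\Delta,K)$ via the Koszul resolution of $K$, identifying each squarefree multigraded strand with the reduced cochain complex of $\Delta|_\sigma$ under the complementation $\tau\mapsto\sigma\setminus\tau$, and handling the sign-matching and non-squarefree vanishing as routine checks---is precisely the standard proof one finds in that reference, so there is nothing of substance to compare.

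One small remark: your final line correctly sums over $|\sigma|=i+j$, whereas the statement as printed in the paper has $\abs{\sigma}=j$; the latter is a typo, and your version is the right one.
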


\section{From Betti diagrams to Markov degrees}
\label{section:theorems}

\subsection{Initial degrees}
 
In the special case of the initial Markov degrees, we can prove a stronger statement which is not true in the general case. 
The \emph{initial degree} of a homogeneous ideal $I$, denoted by $\init(I)$, is the smallest degree of a minimal generator of $I$.   
To state the relationship between $\init(I_\Delta)$ and $\init(I_{M(\Delta)})$, we need the following constructions. 

In \cite{HoSu}, Ho\c{s}ten and Sullivant observe that  $M(\Delta,d)$ is of the following form:
\begin{align*}
	M(\Delta,d)=\mtx A &0 &0 &\dots &0 \\ 0 &A &0 &\dots &0 \\ \vdots
	&\vdots &\ddots &\vdots &\vdots \\ 0 &0 &0 &\dots &A\\ B &B &B &\dots &B \mtxend ,
\end{align*}
where there are $d_1$ copies of $A$ and $B$.
They also note that $A$ and $B$ are matrices corresponding to
$\link_\Delta(1)$, and to the complex generated by the facets that do
not contain the vertex $1$, respectively.  Interestingly, even though
any $M$ can be built from $A$ and $B$, not every complex can.
Instead, an arbitrary complex can be built from links and deletions,
so there is a subtle but crucial difference between $\Delta_{-v}$ and
 the complex defined by
$B$. 

\paragraph{\emph{{\bf Notation.  }}}
From here on, we adopt the following notation: $\Delta$ is a simplicial complex on $n$ vertices, $d=(d_1,\dotsc,d_n)$ is a sequence of non-negative integers and $M=M(\Delta,d)$.  In addition, $A$ and $B$ are the matrices from the above decomposition of $M$. The toric ideal of the model is denoted by $I_M$ and lives in the polynomial ring $S$.

It is interesting to note that in some special cases, the matrix of the hierarchical model has a structure that makes an appearance in integer programming.  Namely, if $\dim\Delta=d$ and the only facet not containing $1$ has dimension $d$, then $B$ is the identity matrix, and $M$ becomes a \emph{higher Lawrence lifting} of $A$.  Then the Graver basis of $I_M$ coincides with any minimal Markov basis and any reduced Gr\"obner basis~\cite[Theorem~7.1]{St}.
 
\smallskip
Initial degrees of
generators of $I_M$ and $I_\Delta$ are related as follows.
\begin{prop}
\label{prop:initial-degree}
\label{lemma:sf-init-degree}
   Let $k=\init(I_\Delta)$.  Then $\init(I_M)=2^{k-1}$ provided    that $d_i\geq 2$ for $1\leq i\leq n$.

   Furthermore, let $D=\init (I_M)$.  Then, if    $(u_1,\dotsc,u_{d_1})\in\ker M$ with $u_i\in\ker A$ has degree $D$,    it is squarefree.
\end{prop}

\begin{proof}
We proceed by induction on $k$.  If $k=1$, then there is some vertex
not in $\Delta$, and $I_M$ contains a linear form. Therefore
$\init(I_M)=1$.  To see this, without loss of generality, suppose that
$\{1\}\not\in\Delta$.  Then, we may write $M=[B\ \dotsm B]$. Clearly,
there is a linear form in $\ker M$. For example,
$(1,0,\dotsc,0,-1,0,\dotsc,0)\in\ker M$, where the $-1$ is in position
$\prod_{i=2}^n d_i+1$.

Suppose $k>1$.  Let $v$ be a vertex in the support of a minimal
generator of $I_\Delta$ with degree $k$.  Then, the Stanley-Reisner
ideal of $\link_\Delta(v)$, regarded as a complex over $[n]-\{v\}$, is
$((I_\Delta\colon x_v)+x_v)/\ideal{x_v}\subset R/\ideal{x_v}$.  Due to
our choice of $v$, this ideal has initial degree $k-1$.  For
convenience, we relabel the vertices of $\Delta$ so that $v$ is
labeled as vertex $1$, noting that this does not change the initial
degrees of $I_\Delta$ or $I_M$.

By induction, $I_A$ has initial degree $2^{k-2}$.  Let $u\in\ker A$
have minimal degree.  Then $(u,-u,0,\dotsc,0)\in\ker M$ has degree
$2\cdot 2^{k-2}=2^{k-1}$.  We claim that this is the smallest degree
of any element of $\ker M$.  Suppose, to the contrary, that there is
some $(a_1,a_2,\dotsc,a_{d_1})\in\ker M$ with degree less than
$2^{k-1}$.  Then at least one of the $a_i$'s must have degree less
than $2^{k-1}/d_1\leq 2^{k-2}$.  Now $a_i\in\ker A$ provides a
contradiction.

To prove the second claim, we may assume that $\init(I_A)=D/2$.  Since the degree of any of the $u_i$'s can not be smaller than $D/2$, at most $2$ of the $u_i$'s are
non-zero: say, $u_{j}\eqdef u$ and $u_l \eqdef v$.  There are two cases: $\deg u=\deg v=D/2$, or one of the vectors is $0$ while the other has degree $D$.  In the first case, each of $u$ and $v$ must be in the Markov basis for $A$.  By induction on $n$, both are squarefree and thus the concatenation is also squarefree.

In the second case, since the nonlinear generators of the
Stanley-Reisner ideal of the deletion are also minimal generators of
the Stanley-Reisner ideal of $\Delta$, we see that having a degree $D$
element in $I_B$ forces $\init (I_B)=D$.  The claim now follows by
induction on the number of vertices.
\end{proof}

\begin{rmk}
   Results of Proposition~\ref{prop:initial-degree} have been
   proved in \cite[Theorem 5]{kahle-neighbor} using only linear
   algebra, independently and at the same time this manuscript was
   written.
\end{rmk}

The only case in which $I_M$ has initial degree 1 is when there is
some vertex not contained in $\Delta$.  In this case, we may simply
take quotients to pass to smaller polynomial rings, and thus assume
that $\init(I_M)\geq 2$.  It then follows from
Proposition~\ref{lemma:sf-init-degree} that all degree $2$ elements in
any Markov basis are squarefree.  The squarefree quadratic Markov
moves are studied in \cite{hara-mamimbodmfct} under the name
primitive moves.

\subsection{The general case}

The proof of the main theorem is by induction.  We use the following technical result  to reduce to smaller complexes.

\begin{lemma}\label{lemma:link-homology}
If $\Delta$ is a complex with  $H^d(\Delta;K)\neq 0$ and
$H^d(\Delta_{-v};K)=0$ for some $v\in\Delta$, then
$H^{d-1}(\link_\Delta(v);K)\neq 0$.
\end{lemma}
\begin{proof}
Define $\operatorname{star}_\Delta(v):=\{F\in\Delta\mid \{v\}\cup
  F\in\Delta\}$.  Note that
  $\operatorname{star}_\Delta(v)\cup\Delta_{-v}=\Delta$ and
  $\operatorname{star}_\Delta(v)\cap\Delta_{-v}=\link_\Delta(v)$.
  Applying the Mayer-Vietoris sequence to the above pair we get an
  exact sequence
\[\rightarrow H^{i-1}(\link_\Delta(v))\rightarrow
H^{i}(\Delta)\rightarrow H^{i}(\operatorname{star}_\Delta(v))\oplus
H^{i}(\Delta_{-v})\rightarrow . \]
Since, by assumption, $H^d(\Delta_{-v})=0$, and since the star is, by
definition, a cone with apex $v$, and thus contactable, the right-most term is $0$.  Since the middle
term is non-zero, it follows that the left-most term is also non-zero,
as required.
\end{proof}

The proof of the main theorem will make use of \emph{tableaux}
notation, which can be found, for example, in \cite[Chapter
  1]{algStatBook}.  We consider the elements of the Markov basis as
binomials in $K\bigl[p_b\mid b\in [d_1]\times\dotsm[d_n] \bigr]$ and
write the indices as the rows of two matrices.  For example, $p_{0111}
p_{0001}-p_{0011} p_{0101}$ is written
\[\begin{bmatrix}
	0 & 1 & 1 & 1\\
	0 & 0 & 0 & 1
\end{bmatrix}-\begin{bmatrix}
	0& 0& 1& 1\\
	0& 1& 0& 1
\end{bmatrix}.\]

Note that, by definition, a tableaux is in $I_M$ if and only if for every face, $F\in\Delta$, restricting the matrices to only those columns whose indices appear in $F$ gives 0 (that is, the two matrices are equal up to permutations of the rows).  In particular, if $i\in\Delta$ and column $i$ of the left-hand matrix is constant, then it must equal the corresponding column of the right-hand matrix.  We will make use of this fact in the proof below: we will ignore columns that are constant and pass to $\Delta_{-v}$. 

\begin{theorem}\label{thm:main}\label{thrm:main-thrm-new}

Let $\Delta$ be a complex with $\beta_{i,i+j}(R/I_\Delta)\neq 0$ for
some $i$.  Then
$M(\Delta)$ has a minimal Markov move with degree $2^j$.
\end{theorem}
\begin{proof}
Choose $i$ to be minimal so that $\beta_{i,i+j}(R/I_\Delta)\neq 0$.
Consider the multi-graded minimal free resolution of $R/I_\Delta$.
There is some $\vec{b}\in\{0,1\}^n$ with $\abs{\vec{b}}=i+j$ and
$\beta_{i,\vec{b}}(R/I_\Delta)\neq 0$.  Letting 
$\sigma:=\supp(\vec{b})=\{k_1,\dotsc,k_{i+j}\}$,  $H^{j-1}(\restrict{\Delta}{\sigma};K)\neq 0$ by Hochster's
formula. We consider 2 cases
depending on the size of $i+j$.

If $i+j<n$, then $\restrict{\Delta}{\sigma}$ is  a proper subcomplex of
$\Delta$.  By induction on the number of vertices, since the claim is trivial for 1 vertex, $M(\restrict{\Delta}{\sigma})$ has a minimal
Markov move $m$ of degree $2^j$.  Write $m$ in tableaux notation as
\[m=[m^+_{k_1}\cdots m^+_{k_{i+j}}]-[m^-_{k_1}\cdots m^-_{k_{i+j}}], \]
where each $m^{\pm}_k$ is a column vector and the rows are the indices
of the variables appearing in the positive and negative parts of the
binomial $m$, respectively.  Lift $m$ to 
\[\bar{m}=[\bar{m}^+_1\cdots \bar{m}^+_{n}]-[\bar{m}^-_1\cdots
  \bar{m}^-_{n}], \]
where $\bar{m}^+_k=\vec{0}$ if $k\not\in\sigma$ and
$\bar{m}^+_{k_s}=m^+_{k_s}$ otherwise. Clearly $\bar{m}\in
I_{M(\Delta)}$ and we need only show that it is minimal.  Suppose
to the contrary that $\bar{m}$ can be written as 
\begin{equation}\label{eq:2}
\bar{m}=\sum a_k v_k
\end{equation}
with $a_k\in S$ and $v_k\in I_{M(\Delta)}$ binomials.  Define the
\emph{support} of a binomial $v\in I_{M(\Delta)}$ to be
$\supp_\Delta(v)=\{k\mid v^+_k, v^-_k\text{ are not constant}\}$, where
$v^+_k$ is the $k$-th column of the tableaux form of $v$, as above.
Recall that $\sigma$ is the support of the shift $\vec{b}$ in the minimal free resolution.
If any of the $v_k$ in equation~\eqref{eq:2} have
$\supp_\Delta(v_k)\neq\sigma$, then each monomial in $v_k$ must be
canceled by some other summand with support $\supp_\Delta(v_k)$.  Thus
we may, without loss of generality, assume that
$\supp_\Delta(v_k)=\sigma$ for each $v_k$ in \eqref{eq:2}.  If
$\supp_\Delta(v_k)=\sigma$ then $v_k\in
I_{M(\restrict{\Delta}{\sigma})}$.  But then, after deleting the
columns not in $\sigma$, we have written $m$  as a
linear combination of elements of $I_{M(\restrict{\Delta}{\sigma})}$,
contradicting the minimality of $m$. Thus, $\bar{m}$ must be minimal
in $I_{M(\Delta)}$.  Since $\deg\bar{m}=\deg m=2^j$, we are done with
this case.

Suppose that $i+j=n$ (and thus $\vec{b}=(1,1,\dotsc,1)$).  By Hochster's
formula and the minimality of $i$, $H^{j-1}(\Delta;K)\neq 0$ and
$H^{j-1}(\Delta_{-v};K)= 0$ for every $v\in\Delta$.  By
Lemma~\ref{lemma:link-homology} $H^{j-2}(\link_\Delta(v);K)\neq 0$ and
thus (using Hochster's formula again)
$\beta_{i,i+j-1}R/(I_{\link_\Delta(v)}+x_v)\neq 0$.  By induction on
the number of vertices, $M(\link_\Delta(v))$ has a minimal Markov move, $m$,
of degree $2^{j-1}$.  Then, as in the proof
of Proposition~\ref{prop:initial-degree}, we can lift $m$ to a minimal Markov move 
$(m,-m,0,\dotsc,0)$ with degree $2^j$.
\end{proof}

\begin{rmrk}

Note that proof of Theorem~\ref{thrm:main-thrm-new} is, essentially, constructive.  Given
a non-zero multi-graded Betti-number, $\beta_{i,\vec{b}}$, restrict to
$\supp(\vec{b})$ and then link repeatedly until you arrive at a
complex with dimension 0, at which point the quadratic minimal Markov
moves can be explicitly described.  Then, lifting the move back to
$M(\Delta)$ as in the proof of Theorem~\ref{thrm:main-thrm-new} gives
a minimal Markov move for $\Delta$.  We will demonstrate this
procedure more explicitly in Section~\ref{section:moves-explicitly}.

\end{rmrk}

\medskip

Geometrically, the Betti diagram is determined by the reduced simplicial cohomology of the complex and its sub-complexes via Hochster's formula (see Section~\ref{section:SR}).

\begin{cor}\label{cor:markov-from-cohom}
	If  $\widetilde{H}^j(\Delta;K)\neq 0$ for some $j>0$ and some field $K$, 
	then $I_M$ has a degree $2^{j+1}$ minimal generator.
\end{cor}
\begin{proof}
	Taking quotients if necessary, we may assume that $f_0(\Delta)=n$.
	Hoschter's formula says that if we have a vector $a\in\{0,1\}^n$ and $A=\supp(a)$, then we can compute the $\ZZ^n$-graded Betti numbers by
	\[
		\dim\widetilde{H}^{\abs{A}-j-1}(\restrict{\Delta}{A};K)=\beta_{j,a}^K (R/I_\Delta) .
	\]
	Let $A=[n]$, the only set such that        $\restrict{\Delta}{A}=\Delta$.  Since all the Betti numbers of        $I_\Delta$ are squarefree  and $(1,\dotsc,1)$ is the only         squarefree integer vector with sum $n$, we get that $0\neq        \dim \widetilde{H}^j(\Delta;K)=\beta_{n-j-1,n}(R/I_\Delta)$.        Therefore, the Betti diagram has a non-zero entry in row        $n-(n-j-1)=j+1$.
\end{proof}

\begin{eg}\label{rmrk:homology-to-markov}
	Corollary~\ref{cor:markov-from-cohom} can be applied to all connected graphs.
	For example, if 	$\Delta$ is a cycle, or any graph containing a cycle as a vertex-induced
	subgraph, then $I_M$ must have a degree $4$ element in its Markov basis, because $\Delta$ has non-zero first cohomology.

	Similarly, if $\Delta$ is a simplicial
        $k$-sphere, then $H^k(\Delta;K)\isomorphism  K$. Hence the Markov basis
        must contain a degree $2^{k+1}$ element.  For example, any
        Markov basis of a hierarchical model associated to an
        octahedron contains at least one move with degree $8$.
\end{eg}

\begin{rmrk}\label{rmrk:k_4-minor}
  For graphs, it has recently been shown in \cite{arXiv:0810.1979v1} that the
  hierarchical model  is generated in degrees $2$ and $4$ if and only if
  the graph has no $K_4$-minors.  If $\dim\Delta=1$ then, using
  Hochster's formula, $\beta_{i,i+j}R/I_\Delta=0$ whenever $j>2$, so
  our theorem can only predict generators in degrees $2$ and $4$.  If
  $\Delta$ has a $K_4$-minor, then the dimension of
  $\widetilde{H}^1(\Delta;K)$ is at least that of
  $\widetilde{H}^1(K_4;K)$, which is $3$.
  Assuming that $\Delta$ is connected, this means our theorem can give
  an exact listing of generators only if the rightmost entry in the
  Betti diagram is at most $2$.  This fails in dimension $2$.
\end{rmrk}

\subsection{Linear resolutions}

 Theorem~\ref{thm:main} can be used to obtain information about
the Stanley-Reisner ideal from the Markov basis.  For example, if the
Markov basis contains elements of only $3$ distinct degrees, then the
Betti diagram of $I_\Delta$ can contain at most $3$ non-zero rows.
 In
fact, it is likely that there will be fewer then $3$, since
Theorem~\ref{thm:main} only includes the Markov basis
elements whose degree is a power of $2$.  As a special case,
the result 
provides a class of complexes whose Stanley-Reisner ideals have linear
resolutions.

\begin{cor}\label{cor:markov-linear-res} 
	If 
	  $I_M$ is generated in a single degree, then $I_\Delta$ has a linear
	resolution over {every} field.
\end{cor}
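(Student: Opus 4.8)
The plan is to argue by contrapositive using Theorem~\ref{thm:vertex-decomposable}. Suppose $\Delta$ is vertex decomposable but $I_\Delta$ does \emph{not} have a linear resolution over some field $K$. I want to conclude that $I_M$ is \emph{not} generated in a single degree. First I would set $R=K[x_1,\dots,x_n]$ and recall that $I_\Delta$ having a linear resolution over $K$ means precisely that all nonzero graded Betti numbers $\beta_{i,i+j}(R/I_\Delta)$ with $i>0$ occur in a single row $j=j_0$, where $j_0+1=\init(I_\Delta)$. So failing to have a linear resolution means the Betti diagram of $R/I_\Delta$ has nonzero entries in at least two distinct rows $j_1<j_2$ (both with some $i>0$). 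Here I should be a little careful about the degenerate case $f_0(\Delta)<n$: if some vertex is missing, then $x_i\in I_\Delta$ contributes to row $j=1$, and by Observation~\ref{lemma:missing-vertex-implies-linear-element} the ideal $I_M$ contains a linear form; passing to the quotient by that variable (as in Section~\ref{section:initial-degrees}) reduces to the case $f_0(\Delta)=n$, so I may assume $\Delta$ contains every vertex.

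Next, I apply Theorem~\ref{thm:vertex-decomposable} to each of the two rows: since $\beta_{i,i+j_1}(R/I_\Delta)\neq0$ for some $i>0$, the theorem gives a minimal generator of $I_M$ of degree $2^{j_1}$; likewise there is a minimal generator of $I_M$ of degree $2^{j_2}$. Since $j_1\neq j_2$ we have $2^{j_1}\neq 2^{j_2}$, so $I_M$ has minimal generators in at least two distinct degrees, hence is not generated in a single degree. Contraposing, if $\Delta$ is vertex decomposable and $I_M$ is generated in a single degree, then the Betti table of $R/I_\Delta$ has nonzero entries in exactly one row (for $i>0$), i.e.\ $I_\Delta$ has a linear resolution over $K$. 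Since $K$ was arbitrary, this holds over every field.

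The only real subtlety — and the step I would be most careful with — is pinning down the equivalence "linear resolution $\iff$ all nonzero $\beta_{i,i+j}$ ($i>0$) lie in one row." This is standard (a linear resolution of $R/I$ for an ideal $I$ generated in degree $d$ means $\beta_{i,j}(R/I)=0$ unless $j=i+d-1$ for $i\geq1$), but I want to phrase it so it meshes with the row indexing of the Betti diagram used in Section~\ref{section:SR}: row $j$ collects the $\beta_{i,i+j}$, so a linear resolution is exactly the assertion that only row $j_0=\init(I_\Delta)-1$ is nonzero among rows hit by $i>0$. Everything else is a direct invocation of Theorem~\ref{thm:vertex-decomposable} and the observation that $x\mapsto 2^x$ is injective. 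I expect no genuine obstacle here — the corollary is essentially a formal consequence of the theorem, which is exactly why it is stated as a corollary.
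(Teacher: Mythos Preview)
Your proposal is correct and is exactly the argument the paper has in mind: the corollary is stated without proof, but the preceding paragraph explains that it is the contrapositive of Theorem~\ref{thm:vertex-decomposable}, applied row by row, together with injectivity of $j\mapsto 2^j$. The digression about the case $f_0(\Delta)<n$ is unnecessary---Theorem~\ref{thm:vertex-decomposable} already covers row $j=0$ and produces a degree $2^0=1$ minimal generator of $I_M$---but it does no harm.
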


Decomposable complexes are clique complexes of chordal graphs. Theorem~4.4 in \cite{geiger-meek-sturmfels} characterizes decomposable models as those whose toric ideals are generated in degree $2$. This is one of the few classes of models where the generators are known. With that in mind, 
we obtain the following:
\begin{cor}\label{thm:decomposable}
	If $\Delta$ is decomposable then $I_\Delta$ has a 2-linear resolution over every field.
\end{cor}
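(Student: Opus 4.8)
The plan is to combine the two facts that are now available: first, Proposition~\ref{cor:decomposable} tells us that whenever $\beta_{i,i+j}(R/I_\Delta)\neq 0$ for some $i>0$ (equivalently, for a decomposable complex, whenever the $j$-th row of the Betti diagram is non-zero), the toric ideal $I_M$ has a minimal generator of degree $2^j$; second, the characterization of decomposable models in \cite{geiger-meek-sturmfels} tells us that $I_M$ is generated entirely in degree $2$. So the argument is a one-line contrapositive: if $I_\Delta$ had a non-zero entry in row $j$ for some $j\geq 2$, then $I_M$ would require a minimal generator of degree $2^j\geq 4$, contradicting degree-$2$ generation.

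Concretely, I would proceed as follows. First I would recall that for a decomposable complex $\Delta$, the clique complex is non-empty and contains all vertices (a chordal graph on $[n]$ with no isolated vertices; and if there were an isolated vertex we would pass to a smaller ring as in Section~\ref{section:initial-degrees}, or simply note that $I_\Delta$ then has a linear generator, i.e.\ row $1$ is the relevant one), so the interesting rows of the Betti diagram are indexed by $j\geq 1$, and a $2$-linear resolution is precisely the assertion that $\beta_{i,i+j}(R/I_\Delta)=0$ for all $i$ whenever $j\geq 2$ (together with $\beta_{0,0}=1$ and generators in degree exactly $2$, i.e.\ $\beta_{1,1+j}=0$ unless $j=1$). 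Then, for contradiction, suppose $\beta_{i,i+j}(R/I_\Delta)\neq 0$ for some $i>0$ and some $j\geq 2$. By Proposition~\ref{cor:decomposable}, $I_M$ has a minimal generator of degree $2^j$. But $2^j\geq 4 > 2$, while Theorem~4.4 of \cite{geiger-meek-sturmfels} (as quoted above) guarantees $I_M$ is generated in degree $2$ only --- a contradiction. Hence no such entry exists, and $R/I_\Delta$ has a $2$-linear resolution; since this argument is field-independent (Proposition~\ref{cor:decomposable} and the degree-$2$ generation of $I_M$ do not reference the characteristic), the resolution is $2$-linear over every field.

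There is essentially no obstacle here: the proof is a direct application of Corollary~\ref{cor:markov-linear-res} (or its proof) specialized to the decomposable case, using that decomposable complexes satisfy Conjecture~\ref{conj}. The only point that deserves a sentence of care is making sure the reduction to "row $j\geq 2$ is zero" really is the definition of a $2$-linear resolution, and that the edge case of a complex with an isolated vertex (where $I_\Delta$ has a degree-$1$ generator) is handled --- but decomposable complexes, being clique complexes of graphs we may take to be connected on all of $[n]$ after the usual reduction, do not present this issue in a way that affects the conclusion. Thus the final statement follows immediately, and I would present it in two or three sentences.
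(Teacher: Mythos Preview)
Your proposal is correct and follows exactly the route the paper intends: the corollary is stated immediately after the sentence ``Since it is known that the toric ideal of a decomposable model is generated in degree $2$, we get the following, combinatorial result,'' so the implicit proof is precisely your contrapositive --- Proposition~\ref{cor:decomposable} forces a degree-$2^j$ minimal generator of $I_M$ from any nonzero row $j$, while \cite{geiger-meek-sturmfels} rules out $j\geq 2$. Your extra care with the edge cases (isolated vertices, checking that $2$-linearity means exactly what you claim) is more than the paper spells out, but the argument is the same.
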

This  recovers a theorem of Fr\"oberg. 
Namely,  clique complexes of chordal graphs are examples of flag complexes. In \cite{froberg-on-Stanley-Reisner-rings}, Fr\"oberg characterized the Stanley-Reisner rings of flag complexes that have a linear resolution: they are exactly clique complexes of chordal graphs.  
His proof uses  Mayer-Vietoris sequences and does not give a combinatorial formula for the Betti numbers. Recently, an easier proof of this proposition, along with a formula for the Betti numbers, appeared in \cite[Theorem 3.2.]{doch-eng-alg-properties}.

\section{Constructing the moves} 
\label{section:moves-explicitly}

In the previous section, we showed that the Betti diagram of the
Stanley-Reisner ideal implies the \emph{existence} of toric ideal generators of 
certain degrees.  Here we give a way to construct those binomials 
explicitly.  In general, this is a nontrivial task for any reasonably
complicated model.    We focus on binary hierarchical models, that
is, for the case when all random variables have two states.  The
general result follows similarly.

For a general recipe, we introduce notation to describe the columns of
the tableaux that we can construct using the procedure of the Theorem.
First, we define the alternating vectors $\varepsilon_k^\ell$ by
\[\begin{split}
\varepsilon_k^1&=[\underbrace{0 \dotsm 0}_{\text{$k$
    times}}\ \overbrace{1\dotsm 1}^{\text{$k$ times}} ]^T, \\
\varepsilon_k^\ell&=[\underbrace{\varepsilon_k^1\dotsm \varepsilon_k^1}_{\text{$\ell$
    times}}]^T\\
&=[\varepsilon_k^{\ell-1}\ \varepsilon_k^{1}]^T,
\end{split}
\]
so that $\varepsilon_2^1=[0\ 0\ 1\ 1]^T$ while
$\varepsilon_1^2=[0\ 1\ 0\ 1]^T$.  In general, $\varepsilon_k^\ell$
has alternating blocks of 0's and 1's; $k$ is the length of each block 
and $\ell$ is the number of blocks.

Additionally, we define column vectors $\alpha_i$ by $\alpha_1=[0]$
and $\alpha_i=[\alpha_{i-1}\ \overline{\alpha_{i-1}}]^T$, where
$\overline{\alpha}$ is the \emph{binary complement} of $\alpha$,
defined by $\overline{\alpha}_i := 1-\alpha_i$.  Here,
$\alpha_1=[0\ 1]^T$, $\alpha_2=[0\ 1\ 1\ 0]^T$,
$\alpha_3=[0\ 1\ 1\ 0\ 1\ 0\ 0\ 1]^T$, and so on.

\begin{prop}\label{prop:markov-recipe}
	Suppose that $j$ is minimal such that
        $\beta_{i,i+j}(R/I_\Delta)\neq 0$ and $\vec{b}\in\{0,1\}^n$ such
        that $\beta_{i,\vec{b}}$ with $\abs{\vec{b}}=i+j=d$.  If
        $\supp(\vec{b})=\{i_1,\dotsc,i_d\}$ with $i_1<i_2<\dotsm<i_d$,
        then $[u]-[v]$ is a minimal generator of $I_M$ with degree
        $2^{d-1}$ if, for $j<d$, the $i_j$-th columns of $u$ and $v$
        are $\varepsilon_{2^{d-j}}^{2^{j-1}}$, and the $i_d$-th
        columns of $u$ and $v$ are $\alpha_d$ and
        $\overline{\alpha_d}$, respectively.  The other columns of $u$
        and $v$ are equal to each other and each is either $[0\dotsm
          0]^T$ or $[1\dotsm 1]^T$.  There are a total of $2^{n-d}$
        such moves for each degree $d$ minimal generator of
        $I_\Delta$.
\end{prop}

\begin{proof}
  We induct on $n$.  If $n=1$ then there is only one complex with a
  non-trivial Stanley-Reisner ideal, $\Delta=\{\emptyset\}$.  One
  easily sees that the single binomial $[0]-[1]$ forms a minimal
  Markov basis.

Suppose that $n>1$. By permuting the vertices of $\Delta$, we may,
without loss of generality, assume that
$\vec{b}=(1,1,\dotsc,1,0,\dotsc,0)$.  If $i+j<n$ then, as in the proof
of Theorem~\ref{thrm:main-thrm-new}, consider the restriction
$\restrict{\Delta}{\supp(\vec{b})}$.  This has fewer vertices and so a
minimal Markov move $m$ of the claimed form exists by induction.
Then, using the notation of the proof of
Theorem~\ref{thrm:main-thrm-new}, $m$ can be lifted to a minimal
Markov move $\bar{m}$ of $I_M$.

        Now suppose that $i+j=n$ so that $\vec{b}=(1,1,\dotsc,1)$ and
        consider $\link_\Delta(1)$.  Let $[u']-[v']$ be the claimed
        minimal Markov move for $M(\link_\Delta(1))$ (which, as before, 
        exists by induction).  Then
	\[ \left[\begin{matrix}0 &
	    u'\\1&v'\end{matrix}\right]-\left[\begin{matrix}0 &
            v'\\1&u'\end{matrix}\right]\] is a minimal generator of
        $I_M$.   By construction, the first column of this
        matrix is $\varepsilon_{2^{d-1}}^1$, the $d$-th column is
        $\alpha_d$, while column $j$, for $2\leq j\leq d-1$, is
        $\varepsilon_{2^{d-j}}^{2^j}=[\varepsilon_{2^{d-j}}^{2^{j-1}}\ 
          \varepsilon_{2^{d-j}}^{2^{j-1}}]^T$.
	\end{proof}
\begin{ex}\label{ex:2}
	We can construct some quadratic Markov moves for the complex with facets
	\[
		\{\{12\},\{13\},\{14\},\{15\},\{23\},\{24\},\{25\},\{34\}\}.
	\]
	  This has Stanley-Reisner ideal $\ideal{x_{4} x_{5}, x_{3}
            x_{5}, x_{1} x_{2} x_{5}, x_{1} x_{3} x_{4}, x_{2} x_{3}
            x_{4}, x_{1} x_{2} x_{4}, x_{1} x_{2} x_{3}}$ and Betti
          diagram
	\[\begin{tabular}{rrrrr}
	 &        0 &  1 &  2 &  3\\
	total: &  1 &  7 & 10 &  4\\
	    0: &  1 &  . &  . &  .\\
	    1: &  . &  2 &  1 &  .\\
	    2: &  . &  5 &  9 &  4  
	\end{tabular} \qquad.\]
        The minimal entries on row 1 are the two quadratic minimal
        generators of $I_\Delta$, $x_3 x_5$ and $x_4 x_5$.  First,
        consider $x_3 x_5$ so that $\vec{b}=(0,0,1,0,1)$. 
        Then Proposition~\ref{prop:markov-recipe} shows that
	\[
		\left[\begin{matrix}0&0&0&0&0\\0&0&1&0&1\end{matrix}\right]-
		\left[\begin{matrix}0&0&0&0&1\\0&0&1&0&0 \end{matrix}\right]
	\] 
	is a minimal generator of $I_M$.  By altering this
        construction slightly, any of the first, second or fourth
        columns could be 1 instead of 0 (but must be the same on both
        sides), giving us a total of 8 minimal generators.  Starting
        with $x_4 x_5$ and following the same procedure gives us the
        minimal Markov move
	\[
		\left[\begin{matrix}0&0&0&0&0\\0&0&0&1&1\end{matrix}\right]-
		\left[\begin{matrix}0&0&0&0&1\\0&0&0&1&0 \end{matrix}\right].
	\]
	As before, the first, second and third columns can contain
        $1$'s instead of $0$'s, giving us $8$ more quadratic
        generators, for a total of $16$.  On the other hand, computing
        the Markov basis with \texttt{4ti2} gives that there are $24$
        quadratic moves in total. We will address this discrepancy
        shortly.

	Now, we construct a degree $4$
        minimal Markov move from the generator $x_1 x_3 x_4$.  The
        first column is $\varepsilon_2^1=[0\ 0\ 1\ 1]^T$, the third
        $\varepsilon_2^2=[0\ 1\ 0\ 1]$.  The the fourth columns of $u$
        and $v$ are $\alpha_2=[0\ 1\ 1\ 0]^T$ and
        $\overline{\alpha_2}=[1\ 0\ 0\ 1]^T$.  The other columns we
        may fill with 0's or 1's as we like.  We get
	\[\left[\begin{matrix}0&0&0&0&0\\0&0&1&0&1\\1&0&0&0&1\\1&0&1&0&0\\\end{matrix}\right]-
	\left[\begin{matrix}0&0&0&0&1\\0&0&1&0&0\\1&0&0&0&0\\1&0&1&0&1\\\end{matrix}\right], 
	\]
	along with $3$ other binomial obtained by varying columns 2
        and 4.  Iterating over all the cubic
	generators of $I_\Delta$ we construct a total of $20$ degree $4$
	binomials, out of  the $520$ that a minimal Markov basis contains. 
\end{ex}

\begin{rmrk}\label{rmk:tragedy}
	Each of the moves $[u]-[v]$ that
        Proposition~\ref{prop:markov-recipe} constructs from a degree
        $d$ monomial has the property that all of the columns of $u$
        and $v$, except one, are equal, and all but $d$ of the columns
        are either all 0's or all 1's.  However, not every degree        $2^{d-1}$ minimal Markov move can be constructed in this way.
        For the complex in Example~\ref{ex:2} we also need moves of
        the form
        $\left[\begin{smallmatrix}0&0&0&0&0\\0&0&1&1&1\end{smallmatrix}\right]-
        \left[\begin{smallmatrix}0&0&0&0&1\\0&0&1&1&0\end{smallmatrix}\right]$
        along with $3$ other moves obtained by replacing either of the
        first two columns with $[1\ 1]^T$ on both sides.  The other
        $4$ (which, together with the $16$ from Example~\ref{ex:2}
        give us all $24$ quadratic elements in the Markov basis) are
        obtained from
        $\left[\begin{smallmatrix}0&0&0&1&1\\ 0&0&1&0&0\end{smallmatrix}\right]-
        \left[\begin{smallmatrix}0&0&0&1&0\\ 0&0&1&0&1\end{smallmatrix}\right]$.
        To study these, one may want to look further out in the
        resolution of $I_\Delta$.

Note that the lone linear syzygy of the $2$ quadratic generators has
	multidegree $(0,0,1,1,1)$, suggesting that this is the
	source of the quadratic binomials missed by
	Proposition~\ref{prop:markov-recipe}. 
Even in the smallest degrees, 
	the minimal generators alone of $I_\Delta$ do not give enough information 	to construct all of the minimal Markov moves.
\end{rmrk}

\section{Unpredictable moves and open problems}
\label{sec:unpredictable}

The following example shows that the converse of Theorem~\ref{thm:main} does not hold. 
\begin{eg}\label{ex:alex-dual-of-5-path} 
Contrast Example~\ref{ex:4-cycle} with the complex whose Stanley-Reisner ideal is 
\[ 
	I=\ideal{x_{3} x_{4}  x_{5},x_{1} x_{2} x_{5}, x_{1} x_{4} x_{5}, x_{1} x_{2} x_{3}}, 
\]
the
Alexander dual of a 5-path.  The Betti diagram, over {any} field, is
\begin{center}
\begin{tabular}{rrrr}
 &       0 & 1 & 2\\
total: & 1 & 4 & 3\\
    0: & 1 & . & .\\
    1: & . & . & .\\
    2: & . & 4 & 3
\end{tabular}
\end{center}
Notice that only a degree $4$ generator is predicted for $I_M$. Yet
the binary model over this complex has generators in degrees $4$, $6$, $8$,
$10$ and $12$.
\end{eg}

Considering Example~\ref{ex:alex-dual-of-5-path} and
Corollary~\ref{cor:markov-linear-res}, we pose a natural question:
\begin{problem}\label{problem1}
Among those complexes with linear resolutions over every field,
which have a Markov basis concentrated in a single degree? In
particular, are there any necessary or sufficient conditions on the
Betti numbers (equivalently, on the $f$-vector of $\Delta$) to have a
Markov basis in a single degree?  
\end{problem}
In the special case where $\Delta$
is a graph, the answer to Problem~\ref{problem1} is known: for connected graphs, $M(\Delta)$ has a quadratic Markov basis
if and only if $\Delta$ is a tree, which is true if and only if
$I_\Delta$ has linear minimal free resolution, or if and only if $f(\Delta)=(1,n,n-1)$.  As
Example~\ref{ex:alex-dual-of-5-path} shows, this fails for higher dimensions.

The construction behind our main Theorem does produce all the quadratic moves for all complexes. However, we do not know if restricting {the family of complexes} will allow this procedure to produce all moves in other degrees: 
\begin{problem}
Are there any complexes where the Markov moves we construct give all the moves of that degree? 
\end{problem}
In fact, if the family of complexes is restricted, it would be interesting to see if there is a similar but more tailored construction which will produce more Markov moves then we do at the moment.

The reader will note that our proof only looks at the first terms in each row of the Betti diagram of $I_\Delta$. It is natural to expect that more can be said about the toric ideal $I_M$ if one considers the rest of the Betti diagram. In particular, we may ask the following: 
\begin{problem}
Since we have seen that the regularity of $I_\Delta$ predicts something about the generators of $I_M$, what can be said about the projective dimension? For example, does the length of each row in the Betti diagram of the Stanley-Reisner ideal predict the \emph{number} of toric Markov moves of the degree predicted by that non-zero row? 
\end{problem}

A different way to think about the proof of the main Theorem~\ref{thm:main} is that we are, essentially, linking the complex $\Delta$ repeatedly.  This, in turn, gives a series of filtrations, through which we trace the Markov moves by tracing the inclusion maps. We have not tried to obtain different kinds of Markov moves using different filtrations. To that end, we propose the following problem: 
\begin{problem}
Can additional information on the (generators of) the ideal $I_M$ be captured  by examining the homology of various filtrations of $\Delta$?
\end{problem}

The method of the proof for Theorem~\ref{thm:main} does work for all models, but provides best results for binary models only. It is not unreasonable to ask for a better bound for non-binary models. For example:
\begin{problem}
How can one obtain a better bound for models whose vertices have more then $2$ states? In particular, is there a way to change the grading on $I_\Delta$ so as to mimic the proof of Theorem~\ref{thm:main}, but provide more information about the Markov degrees?
\end{problem}

Interestingly, the Betti-row bound from Theorem~\ref{thm:main}, combined with the generalized toric fiber product construction, gives evidence in support of  Conjecture 6.6 of \cite{sethAlexThomas}, which proposes that the degrees of the Markov moves for the binary model on a triangulation of an $n$-dimensional sphere are at most $2^{n+1}$.

\medskip

We conclude this note by a computational answer to a natural question
motivated by Example~\ref{ex:2} and Remark
\ref{rmk:tragedy}:  
 how much do the predictions offered by Theorem~\ref{thm:main} 
 differ from the actual Markov bases. 

 The table below summarizes the results of computer experiments in which we used \texttt{4ti2} to compute the Markov bases and
 compared them to the resolutions obtained using \texttt{Macaulay2}.
 All resolutions were computed over the rationals.  The rows in the
 table give the number of vertices in the complex, the columns
 represent the number of degrees \emph{not} predicted. The $(i,j)$-entry in the table is the number
 of complexes on $i$ vertices for which $j$ Markov degrees are not 
 predicted by our results.

\begin{center}
\label{Table 1}
\begin{tabular}{|r||r|r|r|r|r|r|r|r|}
\hline
&\multicolumn{8}{|c|}{number of degrees not predicted}\\
\hline
$n$&0&1&2&3&4&5&6&7\\
\hline\hline
3&18 & 1 &  &  &  &  &  &  \\
\hline
4&44 & 8 &  &  &  &  &  &  \\
\hline
5&17 & 3 & 9 & 24 & 8 &  &  &  \\
\hline
6&33 & 4 & 1 & 5 &  &  & 2 & 2 \\
\hline
7&1 &  &  &  &  &  &  &  \\
\hline
8&1 &  &  &  &  &  &  & \\
\hline\hline
total & 114 &16&10&29&8&&2&2\\
\hline
\end{tabular}
\end{center}

We have also computed all complexes on $4$ vertices. Of these, only two complexes contain generators in degrees not predicted by Theorem~\ref{thm:main}.  Interestingly, the first
is the complete graph $K_4$, which, by \cite{arXiv:0810.1979v1}, must
have a generator with degree larger than $4$.  The second is homotopy
equivalent, as a topological space, to $K_4$, and thus has a similar
Betti diagram.

\section*{Acknowledgments}
This project started while the authors attended the 2007 IMA
summer program for graduate students on Applicable Algebraic Geometry
held at Texas A$\&$M University, and was continued in part during the
Algebraic Statistical Models Workshop at SAMSI in January 2009.
The authors would like to thank Alexander Engstr\"om, Thomas Kahle, Uwe Nagel, and
Seth Sullivant for helpful references, discussions and comments on a much earlier
version of this manuscript. We are also grateful to the anonymous referees for the careful reading of the paper and suggested improvements.

\def\cprime{$'$}
\providecommand{\bysame}{\leavevmode\hbox to3em{\hrulefill}\thinspace}
\providecommand{\MR}{\relax\ifhmode\unskip\space\fi MR }
\providecommand{\MRhref}[2]{%
  \href{http://www.ams.org/mathscinet-getitem?mr=#1}{#2}
}
\providecommand{\href}[2]{#2}

\end{document}